\documentclass[oneside,english]{amsart}
\usepackage[T1]{fontenc}
\usepackage[latin9]{inputenc}
\usepackage{amsthm}
\usepackage{amstext}
\usepackage{amssymb}
\usepackage{esint}
\usepackage{mathrsfs}
\usepackage{color,enumitem,graphicx}

\makeatletter


\numberwithin{equation}{section}
\numberwithin{figure}{section}
\theoremstyle{plain}
\newtheorem{thm}{\protect\theoremname}[section]
  \theoremstyle{plain}
  
  \theoremstyle{plain}
  
  \theoremstyle{plain}
  
  \theoremstyle{plain}
  \newtheorem{lem}[thm]{\protect\lemmaname}
	\theoremstyle{plain}
  \newtheorem{rem}[thm]{\protect\remarkname}
  \theoremstyle{definition}

\makeatother

\usepackage{babel}
  \providecommand{\corollaryname}{Corollary}
  \providecommand{\definitionname}{Definition}
  \providecommand{\lemmaname}{Lemma}
	\providecommand{\remarkname}{Remark}
  \providecommand{\propositionname}{Proposition}
  \providecommand{\examplename}{Example}
  \providecommand{\theoremname}{Theorem}

\DeclareMathOperator{\loc}{loc}

\DeclareMathOperator{\supp}{supp}

\DeclareMathOperator{\esssup}{esssup}
\DeclareMathOperator{\ess}{ess}
\DeclareMathOperator{\ACL}{ACL}

\begin{document}

\title[Sobolev homeomorphisms on homogeneous Lie groups]{Sobolev homeomorphisms and composition operators on homogeneous Lie groups}

\author{Alexander Ukhlov}

\begin{abstract}
In this article, we study Sobolev homeomorphisms and composition operators on homogeneous Lie groups. We prove that a measurable homeomorphism $\varphi: \Omega \to\widetilde{\Omega}$ belongs to the Sobolev space $L^{1}_{q}(\Omega; \widetilde{\Omega})$, $1\leq q < \infty$, if and only if $\varphi$ generates a bounded composition operator on Sobolev spaces.
\end{abstract}

\maketitle

\footnotetext{{\bf {Key words and phrases:}} Sobolev spaces, Composition operators, Homogeneous Lie groups.}
\footnotetext{\textbf{2020 Mathematics Subject Classification:} {46E35, 22E30}}

\section{Introduction }

The composition operators on Sobolev spaces, generated by the composition rule $\varphi^\ast(f) = f \circ \varphi$, trace back to the Sobolev embedding theory \cite{GS82,M69,VGR79} and have applications in the spectral theory of elliptic operators \cite{GPU18, GU17}. In \cite{U93, VU02} (see also \cite{V88} for the case $p=q$), necessary and sufficient conditions were obtained for mappings of Euclidean domains $\varphi: \Omega \to \widetilde{\Omega}$ that generate bounded composition operators on Sobolev spaces:
$$
\varphi^{\ast}: L^1_p(\widetilde{\Omega}) \to L^1_q(\Omega), \quad 1 \leq q \leq p < \infty.
$$

The composition operators on Sobolev spaces can be considered in the framework of composition operators on partially ordered vector spaces, and in particular in the context of composition operators on Lebesgue spaces \cite{H50}. 

In the last decade, the theory of composition operators on Sobolev spaces in $\mathbb R^n$ has continued to develop intensively; see, for example, \cite{MU24_1, MU24_2}, where refined capacitory characterizations of mappings generating composition operators on Sobolev spaces, as well as their connections with mappings defined via moduli inequalities \cite{MRSY}, were obtained. The study of composition operators on weighted Sobolev spaces \cite{GU09} was further extended in \cite{V20}, where several refined results were presented.

The limit case $1 \leq q < p = \infty$, which has significant applications in nonlinear elasticity theory, particularly in the context of Ball's classes \cite{GU24, V12}, was considered in \cite{GU10M, GU10}. In the present article, we consider composition operators on Sobolev spaces in this limit case $1 \leq q < p = \infty$ on homogeneous Lie groups $\mathbb{H}$. In this setting, the main technical challenge is that, on homogeneous Lie groups, Sobolev mappings cannot be characterized in terms of their coordinate functions. We address this by introducing a method based on the upper gradient, interpreted as the volume derivative of a countably additive set function.

The composition operators on Sobolev spaces defined on homogeneous Lie groups $\mathbb{H}$  \cite{FS} were considered in \cite{U11,VU98, VU04, VU05} within the framework of the embedding theory of Sobolev spaces defined in non-holonomic metrics \cite{Ho67,VCh1,VCh2}.
The composition operators on $\mathbb{H}$  within the context of the quasiconformal mappings theory were studied in  \cite{VE15,VE22}.

The main result of \cite{VU98} can be formulated as follows.

\begin{thm} 
\label{thm:98}
Let $\varphi: \Omega \to \widetilde{\Omega}$ be a homeomorphism of domains $\Omega, \widetilde{\Omega} \subset \mathbb{H}$. 
Then $\varphi$ generates a bounded composition operator 
$$
\varphi^*: L^{1}_{p}(\widetilde{\Omega}) \to L^{1}_{q}(\Omega), \quad \varphi^*(f) = f \circ \varphi,
$$
if and only if $\varphi \in L^1_{q,\loc}(\Omega; \widetilde{\Omega})$, has finite distortion, and 
\begin{align}
&K_{p}^{p}(\varphi; \Omega) = K_{p,p}^{p}(\varphi; \Omega) = \ess\sup\limits_{\Omega} \frac{|D_H\varphi(x)|^p}{|J(x,\varphi)|} < \infty 
\quad \text{if} \quad 1 \leq q = p < \infty,\\
&K_{p,q}^{\frac{pq}{p-q}}(\varphi; \Omega) = \int\limits_{\Omega} \left( \frac{|D_H\varphi(x)|^p}{|J(x,\varphi)|} \right)^{\frac{q}{p-q}} \, dx < \infty 
\quad \text{if} \quad 1 \leq q < p < \infty.
\end{align}
\end{thm}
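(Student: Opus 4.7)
The plan is to prove both directions via the Vodop'yanov-Ukhlov capacitary strategy, adapted to the sub-Riemannian geometry of $\mathbb{H}$ in which the horizontal differential $D_H\varphi$ plays the role of the Jacobian matrix. For sufficiency, I would work first with smooth test functions $f\in C^\infty_0(\widetilde{\Omega})$, where the horizontal chain rule gives the pointwise bound
$$
|\nabla_H(f\circ\varphi)(x)| \leq |(\nabla_H f)(\varphi(x))|\,|D_H\varphi(x)|
$$
almost everywhere in $\Omega$. Raising to the $q$-th power and applying the change of variables formula for Sobolev homeomorphisms of finite distortion on $\mathbb{H}$, the integrand transfers to $\widetilde{\Omega}$ weighted by $|D_H\varphi|^q/|J|$ evaluated at $\varphi^{-1}$. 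A H\"older split with exponents $p/q$ and $p/(p-q)$ when $1\leq q<p<\infty$, or extracting the essential supremum when $q=p$, reduces the estimate to the finiteness of $K_{p,q}(\varphi;\Omega)$ or $K_{p}(\varphi;\Omega)$, respectively. Density of $C^\infty_0(\widetilde{\Omega})$ in $L^1_p(\widetilde{\Omega})$ then yields the bound on the full operator.

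For necessity, I would encode the operator bound as a set function on $\widetilde{\Omega}$. Given a bounded $\varphi^*$, set
$$
\Phi(U) = \sup\Bigl\{\|\nabla_H(f\circ\varphi)\|_{L^q(\Omega)}^{\alpha} : f\in C^\infty_0(U),\ \|\nabla_H f\|_{L^p(\widetilde{\Omega})}\leq 1\Bigr\}
$$
for open $U\subset\widetilde{\Omega}$, with $\alpha=q$ when $q=p$ and $\alpha=pq/(p-q)$ when $q<p$, then extend outer-regularly to Borel sets. The key technical step is that $\Phi$ is countably additive; this follows by a disjoint-support argument for admissible test functions combined with a Besicovitch-type covering available on homogeneous Lie groups. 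Lebesgue differentiation with respect to horizontal balls then produces a density $\Phi'$, and testing with bump functions adapted to the homogeneous metric near a Lebesgue point identifies $\Phi'$, after pulling back through the change of variables, with the pointwise distortion, so that integration (respectively passage to essential supremum) delivers the finiteness of $K_{p,q}$ or $K_p$.

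The main obstacle, and the reason the standard Euclidean proof does not transfer verbatim, is recovering the horizontal Sobolev regularity of $\varphi$ itself: on $\mathbb{H}$ one cannot simply test $\varphi^*$ against coordinate functions, which in general are not horizontally Sobolev. The plan is to bypass this by using the countable additivity of $\Phi$ to construct an upper gradient for $\varphi$ in the Heinonen-Koskela sense, via test functions of the form $\eta(y)\cdot d(y,y_0)$ where $d$ is the homogeneous distance and $\eta$ is a cut-off, and then invoke the equivalence between upper gradients and weak horizontal derivatives on homogeneous Lie groups --- precisely the device the introduction highlights --- to conclude that $\varphi\in L^1_{q,\loc}(\Omega;\widetilde{\Omega})$ and has finite distortion, thereby closing the chain of equivalences.
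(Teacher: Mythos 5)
The first thing to note is that this paper does not prove Theorem~\ref{thm:98} at all: it is quoted from \cite{VU98} as background, and the only argument actually carried out in the text is for the limit case $p=\infty$ (Theorem~\ref{sobolev}). Measured against that machinery, your outline is essentially the Vodop'yanov--Ukhlov route: the chain rule plus change of variables plus H\"older for sufficiency; and for necessity the set function $\Phi$ built from the localized operator norm, its countable additivity, Lebesgue differentiation of $\Phi$ and of the volume derivative through the change-of-variables formula, and the distance-type test functions $f_z(y)=(d(y,z)-d(y_0,z))\,\eta(d(y,y_0)/r)$ in place of the coordinate functions, which on $\mathbb{H}$ are not admissible horizontal test functions. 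Your closing paragraph correctly isolates this genuinely sub-Riemannian obstruction and the correct fix (upper gradients for $[\varphi]_z$, then the equivalence with the class $L^1_{q,\loc}(\Omega;\widetilde{\Omega})$).

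There is, however, one concrete error and one ordering problem. The claim that $\Phi$ with exponent $\alpha=q$ is countably additive when $q=p$ is false. For disjoint $U_1,U_2$ and $f=f_1+f_2$ with disjointly supported $f_i\in C_0^\infty(U_i)$ one has both $\|\nabla_H(f\circ\varphi)\|_{L_p(\Omega)}^p=\sum_i\|\nabla_H(f_i\circ\varphi)\|_{L_p(\Omega)}^p$ and $\|\nabla_H f\|_{L_p(\widetilde{\Omega})}^p=\sum_i\|\nabla_H f_i\|_{L_p(\widetilde{\Omega})}^p$, so the normalizations cancel and $\Phi(U_1\cup U_2)=\max\{\Phi(U_1),\Phi(U_2)\}$; the set function is monotone but not additive, and Lemma~\ref{lem:AddFun} cannot be invoked. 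Additivity genuinely requires the exponent $pq/(p-q)$ with $q<p$ (or $p=\infty$, as in Lemma~\ref{mainlem}). For $q=p$ the necessity must be run directly: test on balls $\widetilde{B}$ with the functions $f_z$, change variables, differentiate the volume, and obtain the pointwise bound $|D_H\varphi(x)|^p\leq C\,\|\varphi^*\|^p\,|J(x,\varphi)|$ a.e., which is precisely the essential-supremum statement. Separately, you cannot ``identify $\Phi'$ with the pointwise distortion'' before $D_H\varphi$ is known to exist almost everywhere: the argument must first extract the Sobolev regularity $\varphi\in L^1_{q,\loc}(\Omega;\widetilde{\Omega})$ from the $d_z$ test functions, then invoke a.e.\ $P$-differentiability (Theorem~\ref{RS}) to convert $\sup_z|\nabla_H[\varphi]_z|$ into $|D_H\varphi|$ and $\varphi'_v$ into $|J(\cdot,\varphi)|$, and only then read off finite distortion and the integral bound from the resulting pointwise inequality. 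With these two repairs your plan coincides with the proof in \cite{VU98}.
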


The theory of composition operators on homogeneous Lie groups was extended in \cite{E15}, where the non-homeomorphic case was considered. In the recent work \cite{VP24}, following \cite{U00}, sequences of mappings generating composition operators in Sobolev spaces on Carnot groups were studied.

In this paper, we establish a characterization of composition operators acting on Sobolev spaces in the case $1 \leq q < p = \infty$. Before stating our main theorem, recall that a homeomorphism $\varphi$ is called measurable in the sense of \cite{Z69} if the preimage of every Lebesgue measurable set is Lebesgue measurable. In contrast, for the case $1 \leq q \leq p < \infty$, \cite{VU98} adopted the notion of measurability with respect to the Sobolev capacity: the preimage of any $p$-capacity measurable set is $q$-capacity measurable. The following theorem presents the main result of this article.

\begin{thm} 
\label{sobolev}
Let $\Omega$, $\widetilde{\Omega}$ be domains on the  homogeneous Lie group $\mathbb H$. Then a measurable homeomorphism $\varphi$
 generates a bounded composition operator
$$
\varphi^*: L^{1}_{\infty}(\widetilde\Omega) \to L^{1}_{q}(\Omega), \quad 1\leq q < \infty,
$$
if and only if $\varphi$ belongs to the Sobolev space $L^{1}_{q}(\Omega; \widetilde{\Omega})$.
\end{thm}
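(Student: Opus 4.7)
\medskip
\noindent\textbf{Proof plan.}

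The two directions are handled differently: sufficiency is a direct chain rule computation, while necessity requires building a countably additive set function on $\widetilde{\Omega}$ from $\varphi^{*}$ and recovering the horizontal derivative of $\varphi$ from its Lebesgue density.

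For sufficiency, assume $\varphi\in L^{1}_{q}(\Omega;\widetilde{\Omega})$ and take $f\in L^{1}_{\infty}(\widetilde{\Omega})$. Then $f$ is locally Lipschitz in the homogeneous metric on $\mathbb{H}$ with constant $L=\|D_{H}f\|_{L^{\infty}(\widetilde{\Omega})}$. Approximating $f$ by smooth functions and using the ACL property of Sobolev mappings along horizontal curves, one obtains the pointwise chain rule
\[
|D_{H}(f\circ\varphi)(x)|\leq L\,|D_{H}\varphi(x)|\quad\text{for a.e. }x\in\Omega,
\]
which integrated yields $\|\varphi^{*}(f)\|_{L^{1}_{q}(\Omega)}\leq L\,\|D_{H}\varphi\|_{L^{q}(\Omega)}$, so $\varphi^{*}$ is bounded with $\|\varphi^{*}\|\leq\|D_{H}\varphi\|_{L^{q}(\Omega)}$.

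For necessity, set $K=\|\varphi^{*}\|$, and for each open $U\subset\widetilde{\Omega}$ define
\[
\Phi(U):=\sup\Bigl\{\int_{\Omega}|D_{H}(f\circ\varphi)|^{q}\,dx : f\in L^{1}_{\infty}(\widetilde{\Omega}),\ \supp f\Subset U,\ \|D_{H}f\|_{L^{\infty}}\leq 1\Bigr\}.
\]
Boundedness of $\varphi^{*}$ gives $\Phi(U)\leq K^{q}$, and monotonicity is immediate. For countable additivity on a pairwise disjoint family $\{U_{i}\}$, admissible test functions $f_{i}$ for $\Phi(U_{i})$ concatenate into an admissible $f=\sum_{i}f_{i}$ for $\bigsqcup_{i}U_{i}$; the measurability of $\varphi$ in the sense of \cite{Z69} ensures the preimages $\varphi^{-1}(\supp f_{i})$ are measurable and essentially disjoint in $\Omega$, so the $L^{q}$ energies add. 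A standard Carath\'eodory extension then produces a finite Borel measure $\Phi$ on $\widetilde{\Omega}$ of total mass at most $K^{q}$.

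Since $\mathbb{H}$ is doubling, the Lebesgue differentiation theorem yields a nonnegative density $\phi\in L^{1}(\widetilde{\Omega})$ with $\phi(y)=\lim_{r\to 0}\Phi(B(y,r))/|B(y,r)|$ and $\int_{\widetilde{\Omega}}\phi\,dy\leq K^{q}$. Testing $\Phi(B(\varphi(x_{0}),r))$ against truncated homogeneous-distance functions $y\mapsto\min(d(y,\varphi(x_{0})),r)$, whose horizontal gradient is bounded by $1$, at a Lebesgue point $x_{0}\in\Omega$ of $\varphi$ and comparing $|\varphi(B(x_{0},r))|$ to $|B(\varphi(x_{0}),r)|$ in the limit $r\to 0$, one deduces the pointwise bound
\[
|D_{H}\varphi(x_{0})|^{q}\leq \phi(\varphi(x_{0}))\,|J(x_{0},\varphi)|\quad\text{for a.e. }x_{0}\in\Omega.
\]
Changing variables (valid via the measurability hypothesis) gives $\int_{\Omega}|D_{H}\varphi|^{q}\,dx\leq\int_{\widetilde{\Omega}}\phi\,dy\leq K^{q}$, so $\varphi\in L^{1}_{q}(\Omega;\widetilde{\Omega})$. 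The main obstacle is precisely this last pointwise identification: because Sobolev mappings on a general homogeneous Lie group admit no decomposition into scalar coordinate functions, the link between $\phi$ and $|D_{H}\varphi|$ must be forged entirely through intrinsic homogeneous-distance test functions, with $\Phi$ serving as a surrogate $L^{q}$ upper gradient for $\varphi$---this is the volume-derivative viewpoint announced in the introduction.
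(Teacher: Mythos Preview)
Your overall strategy matches the paper's: a chain-rule estimate for sufficiency, and for necessity the monotone countably additive set function $\Phi$ on $\widetilde{\Omega}$, its Lebesgue density $\Phi'$, distance-type test functions, and a pointwise bound obtained via the differentiation theorem followed by change of variables. So the architecture is right.

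There is, however, a genuine circularity in your necessity argument as written. You state the pointwise inequality as
\[
|D_H\varphi(x_0)|^{q}\le \phi(\varphi(x_0))\,|J(x_0,\varphi)|,
\]
but at this stage neither $D_H\varphi$ nor the formal Jacobian $J(\cdot,\varphi)$ is available: you have not yet shown that $\varphi$ is a Sobolev (hence a.e.\ $P$-differentiable) mapping. The paper avoids this by replacing both objects with quantities that make sense for an arbitrary measurable homeomorphism. In place of $|J|$ it uses the \emph{volume derivative} $\varphi'_v(x)=\lim_{r\to 0}|\varphi(B(x,r))|/|B(x,r)|$, which exists a.e.\ because $A\mapsto|\varphi(A)|$ is a monotone countably additive set function; the associated change of variables formula~\eqref{chv} then holds without any a priori regularity. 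In place of $|D_H\varphi|$ the paper works through the \emph{definition} of $L^{1}_{q}(\Omega;\widetilde{\Omega})$: it shows, for every $z\in\widetilde{\Omega}$, that $[\varphi]_z=d(\varphi(\cdot),z)\in L^1_q(\Omega)$ and that
\[
|\nabla_H[\varphi]_z(x)|^{q}\le 2^{\nu}\widetilde{C}_q^{\,q}\,\Phi'(\varphi(x))\,\varphi'_v(x)
\]
uniformly in $z$, so that $g(x):=\Phi'(\varphi(x))\varphi'_v(x)$ is an $L_q$ upper gradient of $\varphi$. Your closing sentence shows you sense this is the point, but the body of your argument must actually be rewritten in these terms; otherwise the deduction is circular.

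One smaller issue: your test function $y\mapsto\min(d(y,\varphi(x_0)),r)$ is not compactly supported in any ball, so it is not admissible for your own $\Phi$. The paper uses $f_z(y)=(d(z,y)-d(z,y_0))\,\eta(d(y,y_0)/r)$ with a smooth cutoff $\eta$, which gives $\supp f_z\subset 2B$ and a uniform bound $|\nabla_H f_z|\le 1+C$; this localisation is what produces the factor $\Phi(2B)$ and the constant $2^{\nu}$ after differentiation.
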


This result can be considered in the context of definitions of Sobolev classes on homogeneous Lie groups.
In \cite{VU98}, it was introduced a definition of Sobolev mappings on homogeneous Lie groups, by using the upper gradient approach combined with composition with Lipschitz functions. This definition can be viewed within the framework of Reshetnyak's definition \cite{R97} and in the broader context of previous works \cite{A90, KS}. The theory of Sobolev spaces on metric measure spaces, based on the upper gradient approach, is presented in \cite{HKST}. However, the application of the upper gradient approach to Sobolev mappings taking values in metric measure spaces, in combination with the Kuratowski embedding, can be problematic, as indicated by the results in the work \cite{GE25}.

The functional definition of Sobolev classes can be seen, in some sense, as a reformulation of Reshetnyak's definition \cite{R97} in a global context of the geometric theory of composition operators on Sobolev spaces \cite{VU04,VU05}. The description of Sobolev classes within the framework of the composition operator theory is one of the approaches explored in the study of Sobolev spaces \cite{GU10M,GU10}. Furthermore, this approach facilitates the application of methods from linear operator theory to Sobolev analysis, as demonstrated in \cite{GPU24-N,GU24,MV}.

The essential tool emphasized in this article is the change of variables formula for Sobolev mappings on homogeneous Lie groups \cite{V00, VU96}, along with the use of countably additive set functions related to the norms of composition operators in Sobolev spaces \cite{U93}.

We remark that the functional definition of Sobolev classes in terms of composition operators can be generalized within the framework of Sobolev space theory on metric measure spaces, employing the methods developed in \cite{MU}.

\section{Homogeneous Lie groups and Sobolev spaces}

\subsection{Homogeneous Lie groups}

Recall that a homogeneous  Lie group \cite{FS}, or, in another terminology, a Carnot group \cite{Pa} is a~connected simply
connected nilpotent Lie group~$\mathbb H$ whose Lie algebra $V$ is decomposed into the direct sum~ $V_1\oplus\cdots\oplus V_m$, $\dim V_1\geqslant 2$, of vector spaces such that
$$
[V_1,\ V_i]=V_{i+1},\,\,1\leqslant i\leqslant m-1, \,\,\text{and}\,\,[V_1,\ V_m]=\{0\}.
$$
Let
$X_{11},\dots,X_{1n_1}$ be left-invariant basis vector fields of
$V_1$. Since they generate $V$, for each $i$, $1<i\leqslant m$, one
can choose a basis $X_{ik}$ in $V_i$, $1\leqslant k\leqslant
n_i=\dim V_i$, consisting of commutators of order $i-1$ of fields
$X_{1k}\in V_1$. We identify elements $h$ of $\mathbb H$ with
vectors $x\in\mathbb R^N$, $N=\sum_{i=1}^{m}n_i$, $x=(x_{ik})$,
$1\leqslant i\leqslant m$, $1\leqslant k\leqslant n_i$ by means of
exponential map $\exp(\sum x_{ik}X_{ik})=h$. Dilations $\delta_t$
defined by the formula 
\begin{multline}
\nonumber
\delta_t x= (t^ix_{ik})_{1\leqslant i\leqslant m,\,1\leqslant k\leqslant n_i}\\
=(tx_{11},...,tx_{1n_1},t^2x_{21},...,t^2x_{2n_2},...,t^mx_{m1},...,t^mx_{mn_m}),
\end{multline}
are automorphisms of
$\mathbb H$ for each $t>0$. Lebesgue measure $dx$ on $\mathbb R^N$
is the bi-invariant Haar measure on~ $\mathbb H$ (which is generated
by the Lebesgue measure by means of the exponential map), and
$d(\delta_t x)=t^{\nu}~dx$, where the number
$\nu=\sum_{i=1}^{m}in_i$ is called the homogeneous dimension of the
group~$\mathbb H$. 

Recall that a homogeneous norm on the group $\mathbb H$ is a continuous function
$|\cdot|:\mathbb H\to [0,\infty)$ that is $C^{\infty}$-smooth on $\mathbb
H\setminus\{0\}$ and has the following properties:

(a) $|x|=|x^{-1}|$ and $|\delta_t(x)|=t|x|$;

(b) $|x|=0$ if and only if $x=0$;

(c) there exists a constant $\tau_0>0$ such that $|x_1
x_2|\leqslant \tau_0(|x_1|+|x_2|)$ for all $x_1,x_2\in \mathbb H$.

The homogeneous norm on the group $\mathbb H$ defines a homogeneous (quasi)metric
$$
\rho(x,y)=|y^{-1} x|.
$$

Recall that a continuous map $\gamma: [a,b]\to\mathbb H$ is called a continuous curve on $\mathbb H$. This continuous curve is rectifiable if
$$
\sup\left\{\sum\limits_{k=1}^m|\left(\gamma(t_{k})\right)^{-1}\gamma(t_{k+1})|\right\}<\infty,
$$
where the supremum is taken over all partitions $a=t_1<t_2<...<t_m=b$ of the segment $[a,b]$.
In \cite{Pa} it was proved that any rectifiable curve is differentiable almost everywhere and $\dot{\gamma}(t)\in V_1$: there exists measurable functions $a_i(t)$, $t\in (a,b)$ such that
$$
\dot{\gamma}(t)=\sum\limits_{i=1}^n a_i(t)X_i(\gamma(t))\,\,\text{and}\,\,
\left|\left(\gamma(t+\tau)\right)^{-1}\gamma(t)exp(\dot{\gamma}(t)\tau)\right|=o(\tau)\,\,\text{as}\,\,\tau\to 0
$$
for almost all $t\in (a,b)$.
The length $l(\gamma)$ of a rectifiable curve $\gamma:[a,b]\to\mathbb H$ can be calculated by the formula
$$
l(\gamma)=\int\limits_a^b {\left\langle \dot{\gamma}(t),\dot{\gamma}(t)\right\rangle}_0^{\frac{1}{2}}~dt=
\int\limits_a^b \left(\sum\limits_{i=1}^{n}|a_i(t)|^2\right)^{\frac{1}{2}}~dt,
$$
where ${\left\langle \cdot,\cdot\right\rangle}_0$ is the inner product on $V_1$. The result of \cite{CH,R38} implies that one can connect two arbitrary points $x,y\in \mathbb H$ by a rectifiable curve. Remark that this rectifiable curve can be represented as a basis horizontal $l$-broken line, where $l$ does not exceed some constant $M_H<\infty$, for any two points $x,y\in \mathbb H$ \cite{FS}. In \cite{ABB} it was proved more precise estimate, that $M_H$ does not exceed $2N$. Recently, this result was refined in the case of two-step Carnot groups with horizontal distribution of corank $1$ \cite{GZ24}.

The Carnot-Carath\'eodory distance $d(x,y)$ is the infimum of the lengths of all rectifiable curves in $\mathbb{H}$ joining $x$ to $y$. 
It is well known that the Hausdorff dimension of $(\mathbb{H}, d)$ coincides with its homogeneous dimension~$\nu$; 
see, for example, \cite[Corollary~3.1.3]{ABB}.

\subsection{Sobolev spaces on homogeneous Lie groups}

Let $\mathbb H$ be a homogeneous Lie group  with the one-parameter dilatation
group $\delta_t$, $t>0$, and a homogeneous norm $|\cdot|$. Suppose that $E$ is a measurable subset of $\mathbb H$. The Lebesgue space $L_p(E)$, $p\in [1,\infty]$, is the space of $p$-th power
integrable functions $f:E\to\mathbb R$ with the standard norm:
$$
\|f\|_{L_p(E)}=\biggl(\int\limits_{E}|f(x)|^p~dx\biggr)^{\frac{1}{p}},\,\,1\leq p<\infty,
$$
and $\|f\|_{L_{\infty}(E)}=\esssup_{E}|f(x)|$ for $p=\infty$. We
denote by $L_{p,\loc}(E)$ the space of functions
$f: E\to \mathbb R$ such that $f\in L_p(F)$ for each compact
subset $F$ of $E$.

Let $\Omega$ be an open set in $\mathbb H$. The (horizontal) Sobolev space
$W^{1}_{p}(\Omega)$, $1\leqslant p\leqslant\infty$, consists of functions
$f:\Omega\to\mathbb R$ which are locally integrable in $\Omega$, possess weak
derivatives $X_{1i} f$ along the horizontal vector fields $X_{1i}$, $i=1,\dots,n_1$,
with the norm
$$
\|f\|_{W^{1}_{p}(\Omega)}=\|f\|_{L_p(\Omega)}+\|\nabla_{\textrm{H}} f\|_{L_p(\Omega)}<\infty,
$$
where $\nabla_\textrm{H} f=(X_{11}f,\dots,X_{1n_1}f)$ is the horizontal subgradient of $f$.
If $f\in W^{1}_{p}(U)$ for each bounded open set $U$ such that
$\overline{U}\subset\Omega$, then  $f$ belongs to the
class $W^{1}_{p,\loc}(\Omega)$.

The (horizontal) homogeneous Sobolev space
$L^{1}_{p}(\Omega)$, $1\leqslant p\leqslant\infty$, consists of functions
$f:\Omega\to\mathbb R$ which are locally integrable in $\Omega$, possess weak
derivatives $X_{1i} f$ along the horizontal vector fields $X_{1i}$, $i=1,\dots,n_1$,
with the seminorm
$$
\|f\|_{L^{1}_{p}(\Omega)}=\|\nabla_{\textrm{H}} f\|_{L_p(\Omega)}<\infty.
$$

\subsection{$P$-differentiability on homogeneous Lie groups}

Let $\Omega$ be a domain on the  homogeneous Lie group $\mathbb H$. Suppose that a mapping $\varphi : \Omega\to\mathbb H$. Then a Lie group homomorphism $\psi:\mathbb H\to\mathbb H$, such that $\exp^{-1}\circ\psi\circ \exp(V_1)\subset V_1$, is called the $P$-differential of $\varphi$ at the point $a$ of the domain $\Omega$ if the set
$$
A_{\varepsilon} =\{z\in E:
d(\psi(a^{-1}x)^{-1}\cdot \varphi(a)^{-1}\varphi(x))<\varepsilon d(a^{-1}x)\}
$$
is a neighborhood of $a$ (relative to $\Omega$) for every $\varepsilon>0$.

The concept of a $P$-differentiability was introduced in \cite{Pa}, where it was proved that Lipschitz mappings defined on open subsets of homogeneous Lie groups are $P$-differentiable almost everywhere. However, this result does not lead to the Rademacher-Stepanov theorem on homogeneous Lie groups, as these non-com\-mu\-ta\-tive groups lack the Kirszbraun type theorems on the extension of Lipschitz mappings from closed subsets of $\mathbb{H}$. The Rademacher-Stepanov theorem on homogeneous Lie groups $\mathbb{H}$ was obtained in \cite{VU96}, where it was proved that Lipschitz mappings defined on measurable subsets $E\subset \mathbb{H}$ are $P$-differentiable almost everywhere.
The $P$-differentiability of Lipschitz functions on measurable subsets $E\subset \mathbb{H}$ is defined as the (approximate) differentiability at density points of $E$ with respect to $E$ \cite{VU96}.
 Namely, the main result of \cite{VU96} can be formulated in the following form.

\begin{thm}
\label{RS}
Let $E$ be a measurable set on the homogeneous Lie group $\mathbb H$. 
Suppose a mapping $\varphi:E\to\mathbb{H}$ is such that 
$$
\limsup_{y\to x, y\in E} \frac{d(\varphi(x),\varphi(y))}{d(x,y)}<\infty,\,\,\text{for almost all}\,\,x\in E.
$$

Then the $P$-differential of the mapping $\varphi$ exists and is unique at almost every point $x\in E$. The corresponding homomorphism of the Lie algebras $D_H\varphi(x)$ takes the basis vectors $X_{11},\dots,X_{1n_1}$ of the subalgebra $V_1$ into the vectors $\{X_{1i} \in V_1\}$, $i=1,\dots,n_1$.

\end{thm}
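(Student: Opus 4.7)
The plan is a Stepanov-type reduction to Pansu's Rademacher theorem. First, set
$$
L(x) = \limsup_{y\to x,\, y\in E} \frac{d(\varphi(x),\varphi(y))}{d(x,y)},
$$
which by hypothesis is finite at almost every $x \in E$. I would then write $\{L<\infty\}$ modulo a null set as the union of the measurable pieces
$$
E_k = \bigl\{x \in E : d(\varphi(x),\varphi(y)) \leq k\, d(x,y)\text{ for every } y\in E\cap B(x,1/k)\bigr\}, \qquad k \in \mathbb{N},
$$
on which $\varphi|_{E_k}$ is $k$-Lipschitz with respect to the Carnot-Carath\'eodory distance. Since $P$-differentiability is defined here as approximate $P$-differentiability at density points, it suffices to prove the theorem for a Lipschitz map defined on a single measurable set $E_k$.

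The direct obstruction to invoking the classical Pansu theorem is the absence of a Kirszbraun-type extension on $\mathbb{H}$: one cannot in general extend $\varphi|_{E_k}$ to a Lipschitz map on an open neighborhood. My approach is to work intrinsically at a Lebesgue density point $x_0$ of $E_k$ by examining the rescaled maps
$$
\psi_t(y) = \delta_{1/t}\bigl(\varphi(x_0)^{-1}\cdot\varphi(x_0\cdot\delta_t(y))\bigr),
$$
defined on the dilated sets $E_k^{(t)} = \delta_{1/t}(x_0^{-1}\cdot E_k)$. The $k$-Lipschitz bound passes to $\psi_t$ uniformly in $t$, while the density of $x_0$ in $E_k$ ensures that $E_k^{(t)}$ exhausts $\mathbb{H}$ in measure as $t\to 0$. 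Extending $\psi_t$ off $E_k^{(t)}$ by nearest-point projection and applying an Arzel\`a-Ascoli argument in the Carnot-Carath\'eodory topology, one extracts subsequential local uniform limits $\psi_0: \mathbb{H}\to\mathbb{H}$ to serve as candidate $P$-differentials.

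The main obstacle is to verify that every such limit is a Lie group homomorphism preserving the horizontal layer $V_1$, and that the limit is in fact unique. The homomorphism identity $\psi_0(uv)=\psi_0(u)\psi_0(v)$ can only be read off from $\psi_t(uv)\approx \psi_t(u)\psi_t(v)$ when all three of $u,v,uv$ lie in $E_k^{(t)}$; for a fixed pair $(u,v)$ and almost every $x_0$, applying the Lebesgue density theorem simultaneously at the three translates $x_0\cdot\delta_t(u)$, $x_0\cdot\delta_t(v)$, $x_0\cdot\delta_t(uv)$ shows that this triple is simultaneously in $E_k^{(t)}$ with probability tending to one as $t\to 0$, which is enough to pass to the limit after exhausting a countable dense set of pairs. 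Uniqueness of the limit then follows from the same rigidity: any second subsequential limit must agree with the first on a dense set of pairs. Finally, the preservation of $V_1$ is forced by the fact that horizontal curves in $\mathbb{H}$ have tangent vectors in $V_1$, combined with the Lipschitz estimate on $\psi_t$, so that the homomorphism $\psi_0$ maps horizontal directions to horizontal directions. This density-point strategy replaces the missing Kirszbraun extension.
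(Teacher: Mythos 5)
The paper does not actually prove Theorem~\ref{RS}; it is quoted from \cite{VU96}, and the only hint given about the argument is that the representation $h=\prod_{j=1}^{l}\exp(a_jX_{i_j})$, $l\le M_H$, of an arbitrary group element as a product of boundedly many horizontal exponentials ``has a significant role.'' Your Stepanov decomposition into the sets $E_k$ and the decision to work with approximate differentiability at density points of $E_k$ (rather than attempting a Kirszbraun extension) are both consistent with the cited approach and are fine as far as they go, modulo routine care (the sets $E_k$ must be further cut into pieces of diameter $<1/k$ to get genuine Lipschitz restrictions, and the ``nearest-point projection'' extension of $\psi_t$ is not Lipschitz --- it is only Lipschitz up to an additive error controlled by $\dist(\cdot,E_k^{(t)})$, which must be tracked through the Arzel\`a--Ascoli step).

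The genuine gap is in the central step. You write that the homomorphism identity is ``read off from $\psi_t(uv)\approx\psi_t(u)\psi_t(v)$'' once the three translates $x_0\delta_t(u)$, $x_0\delta_t(v)$, $x_0\delta_t(uv)$ all lie in $E_k$, and you justify only the simultaneous membership of the three points, not the approximate multiplicativity itself. But membership gives you nothing beyond the Lipschitz bound at those three points; the relation $\psi_t(uv)\approx\psi_t(u)\psi_t(v)$ is precisely the content of Pansu's theorem and is false for a generic triple of points of a generic Lipschitz map --- even one defined on all of $\mathbb H$ --- unless one has already established, at the base point $x_0$, the existence of derivatives along the horizontal one-parameter subgroups. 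That step (a.e.\ differentiability of $t\mapsto\varphi(x\exp(tX_{1i}))$ via the one-dimensional Rademacher theorem and Fubini along the foliations by integral curves of the $X_{1i}$, followed by an approximate-continuity argument to show the horizontal directional derivatives assemble into a homomorphism) is where the broken-line representation $h=\prod\exp(a_jX_{i_j})$ enters: it lets one build $D_H\varphi(x)$ on all of $\mathbb H$ from its values on the horizontal directions and simultaneously forces $V_1$ to be mapped into $V_1$. Your proposal contains no substitute for this mechanism, and your uniqueness argument inherits the same circularity: different subsequences of $\psi_t$ could a priori produce different limits, and agreement ``on a dense set of pairs'' is only meaningful once multiplicativity --- hence the full limit along horizontal directions --- is already known. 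As written, the proof of the homomorphism property, which is the heart of the theorem, is asserted rather than proved.
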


In the proof of Theorem~\ref{RS} in \cite{VU96}, the representation of any point $h\in\mathbb H$
$$
h=\prod\limits_{j=1}^l\exp\left(a_j X_{i_j}\right), \,\,i_j=1,...,n_1,\,\,l\leq M_H,
$$
where $X_{i_j}\in V_1$, has a significant role. This representation states that any two points on homogeneous Lie groups $\mathbb{H}$ can be connected by a basis horizontal $l$-broken line, where $l$ does not exceed the bound $M_H$  \cite{ABB,FS}.

It is worth noting that the results and the method of differentiability at density points, presented in \cite{VU96}, which address the Rademacher-Stepanov theorem, appear to have been reiterated in \cite{VM01}. Furthermore, the proof of the Rademacher-Stepanov theorem, which can be realized on Carnot-Carath\'eodory spaces, was provided in \cite{V00}

\subsection{Sobolev mappings on homogeneous Lie groups}

Let $\Omega$ be a domain on the homogeneous Lie group $\mathbb H$. Then a mapping  $\varphi:\Omega\to\mathbb{H}$ is absolutely continuous on lines ($\varphi\in \ACL(\Omega;\mathbb{H})$), if for each domain $U$ such that $\overline{U}\subset\Omega$ and each foliation $\Gamma_i$ defined by a left-invariant vector field $X_{1i}$, $i=1,\dots,n_1$, $\varphi$ is absolutely continuous on $\gamma\cap U$ with respect to one-dimensional Hausdorff measure for $d\gamma$-almost every curve $\gamma\in\Gamma_i$. Recall that the measure $d\gamma$ on the foliation $\Gamma_i$ equals  the inner product $i(X_{1i})dx$ of the vector field $X_{1i}$ and the bi-invariant volume $dx$ (see, for example, \cite{Fe,VU96}).

Since $X_{1i} \varphi(x)\in{V}_1$ for almost all $x\in\Omega$ \cite{Pa},
$i=1,\dots,n_1$, the linear mapping $D_H \varphi(x)$ with matrix
$(X_{1i}\varphi_j(x))$, $i,j=1,\ldots,n_1$, takes the horizontal subspace $V_1$ to $V_1$
and is called the formal
horizontal differential of the mapping $\varphi$ at $x$. Let $|D_H
\varphi(x)|$ be its norm:
$$
|D_H\varphi(x)|=\sup\limits_{\xi\in
V_1,\,|\xi|=1} |D_H\varphi(x)(\xi)|.
$$

It was proved \cite{V00,VU96} that this formal horizontal differential $D_H:V_1\to{V}_1$ induces a homomorphism
$D\varphi:V\to{V}$ of the Lie algebras which is called the formal differential, and 
\begin{equation}
\label{dif}
|D\varphi(x)|\leq C_H |D_H\varphi(x)|,\,\,\text{for almost all}\,\,x\in\Omega,
\end{equation}
where $C_H$ is a constant with $0<C_H<\infty$.

The determinant of the matrix
$D\varphi(x)$ is called the (formal) Jacobian of the mapping $\varphi$, it is denoted by $J(x,\varphi)$.

We will use the definition of Sobolev measurable mappings (a mapping whose preimage of any Lebesgue measurable set is Lebesgue measurable) in terms of composition operators on spaces of Lipschitz functions \cite{V99,VU98}, which generalize the definition of Sobolev classes defined in $\mathbb R^n$, given in \cite{R97}. Recall that 
the Sobolev class $L^{1}_{p}(\Omega;\mathbb{H})$, $1 \leq p<\infty$, is a class of measurable mappings $\varphi:\Omega\to\mathbb{H}$ such that:

\noindent
$(1)$ for every point  $z \in \mathbb{H}$ the function $[\varphi]_{z}(x) = d(\varphi(x), z)$ belongs to the Sobolev space $L^{1}_{p}(\Omega)$;

\noindent
$(2)$ there exists a function (\textit{an upper gradient of mapping $\varphi$}) $g \in L_p(\Omega)$ that is an upper gradient of $[\varphi]_{z}$ for all $z \in \mathbb{H}$, i.e. there exists a constant $K_p$ such that
$$
|\nabla_H([\varphi]_{z})|(x)\leqslant K_p\cdot g(x), \,\, \text{for almost all}\,\,x\in\Omega.
$$

The Sobolev class $L^{1}_{p,\loc}(\Omega;\mathbb{H})$, $1 \leq p<\infty$, is a class of measurable mappings $\varphi:\Omega\to\mathbb{H}$ such that $\varphi\in L^{1}_{p}(U;\mathbb{H})$, where $U$ is a bounded open set, $\overline{U}\subset \Omega$.

\begin{rem}
To ensure the validity of the chain rule when composing Sobolev mappings with Lipschitz functions, we define Sobolev classes via the composition of measurable mappings -- those whose preimage of a Lebesgue measurable set is also Lebesgue measurable. This approach is crucial in generalizing Sobolev mappings, particularly in the context of Carnot groups.
\end{rem}

Let us recall the formula of the change of variables in the Lebesgue integral \cite{V00, VU96}.
Let $\varphi : \Omega \to \mathbb{H}$ be a measurable mapping that belongs to the Sobolev class $W^1_{1,\loc}(\Omega,\mathbb{H})$.
Then there exists a measurable set $S \subset \Omega$, $|S| = 0$, such that the mapping $\varphi : \Omega \setminus S \to \mathbb{H}$ has the Luzin $N$-property (the image of a set of measure zero has measure zero) and the change of variables formula
\begin{equation}
\label{chvf}
\int\limits_E f\circ\varphi (x) |J(x,\varphi)|~dx = \int\limits_{\mathbb{H} \setminus \varphi(S)} f(y) N_\varphi(y,E)~dy
\end{equation}
holds for every measurable set $E \subset \Omega$ and every non-negative measurable function $f : \mathbb{H} \to \mathbb{R}$. Here 
$N_\varphi(y,E)$ is the multiplicity function defined as the number of preimages of $y$ under $\varphi$ in $E$.
If $\varphi$ possesses the Luzin $N$-property, then $|\varphi(S)| = 0$, and the second integral can be rewritten as an integral over $\mathbb{H}$.

\section{Composition operators on Sobolev spaces}

In the geometric theory of composition operators on Sobolev spaces \cite{VU04,VU05}, a critical role is played by set functions associated with the norms of these operators. The set function~$\Phi$ was first introduced in \cite{U93} in connection with the solution of Reshetnyak's problem, formulated in 1968 at the First Donetsk Colloquium on Mapping Theory.

Recall that a locally finite non-negative function $\Phi$ defined on open subsets of $\Omega \subset \mathbb{H}$ is called a monotone countably additive set function \cite{VU04,VU05} if

\noindent
1) $\Phi(U_1)\leq \Phi(U_2)$ if $U_1\subset U_2\subset\Omega$;

\noindent
2)  for any collection $U_i \subset \Omega$, $i=1,2,...$, of mutually disjoint open sets
$$
\sum_{i=1}^{\infty}\Phi(U_i) = \Phi\left(\bigcup_{i=1}^{\infty}U_i\right).
$$

The following lemma gives properties of monotone countably additive set functions defined on open subsets of $\Omega\subset \mathbb H$ \cite{VU04,VU05}.

\begin{lem}
\label{lem:AddFun}
Let $\Phi$ be a monotone countably additive set function defined on open subsets of the domain $\Omega\subset \mathbb H$. Then

\noindent
(a) at almost all points $x\in \Omega$ there exists a finite derivative
$$
\lim\limits_{r\to 0}\frac{\Phi(B(x,r))}{|B(x,r)|}=\Phi'(x);
$$

\noindent
(b) $\Phi'(x)$ is a measurable function;

\noindent
(c) for every bounded open set $U\subset \Omega$ the inequality
$$
\int\limits_U\Phi'(x)~dx\leq \Phi(U)
$$
holds.
\end{lem}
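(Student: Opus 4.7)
The plan is to reduce Lemma~\ref{lem:AddFun} to the classical Lebesgue differentiation and Radon--Nikodym theory for locally finite Borel measures on $\Omega\subset\mathbb{H}$, taking advantage of the fact that $(\mathbb{H},\rho,dx)$ is a doubling quasi-metric measure space: the relation $d(\delta_t x)=t^{\nu}\,dx$ gives $|B(x,tr)|=t^{\nu}|B(x,r)|$, which together with the quasi-triangle inequality for $|\cdot|$ yields the usual doubling condition.

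The first step would be to extend $\Phi$ from open subsets of $\Omega$ to a locally finite Borel measure $\mu$ on $\Omega$ via the outer measure
$$
\mu(A)=\inf\bigl\{\Phi(U):A\subset U\subset\Omega,\ U\text{ open}\bigr\}.
$$
Countable additivity of $\Phi$ on disjoint open families and monotonicity on nested open sets are exactly the hypotheses required by the Carath\'eodory criterion, so $\mu$ is a Borel regular outer measure with $\mu(U)=\Phi(U)$ for every open $U\subset\Omega$ and $\mu$ finite on relatively compact open sets. From this point on, the problem is reduced to studying the volume derivative of the locally finite Borel measure $\mu$.

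Next, I would perform the Lebesgue decomposition $\mu=\mu_{\mathrm{ac}}+\mu_s$ with respect to Lebesgue measure on $\Omega$ and apply the Lebesgue differentiation theorem for Radon measures on the doubling space $(\mathbb{H},\rho,dx)$. The doubling property above, combined with a Vitali/Besicovitch-type covering lemma for $|\cdot|$-balls, yields for almost every $x\in\Omega$
$$
\Phi'(x)=\lim_{r\to 0}\frac{\mu(B(x,r))}{|B(x,r)|}=\frac{d\mu_{\mathrm{ac}}}{dx}(x)<\infty,
$$
which proves~(a). Statement~(b) then follows because for each fixed $r>0$ the function $x\mapsto\mu(B(x,r))$ is lower semicontinuous on $\Omega$ (from monotonicity of $\Phi$ on open sets together with the fact that $B(y,r)$ sweeps out $B(x,r)$ as $y\to x$), so $\Phi'$ is the almost everywhere pointwise limit, taken along a countable sequence $r_n\to 0$, of Borel functions and is therefore measurable. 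For~(c), on any bounded open $U\subset\Omega$ one simply writes
$$
\int_U\Phi'(x)\,dx=\int_U\frac{d\mu_{\mathrm{ac}}}{dx}(x)\,dx=\mu_{\mathrm{ac}}(U)\leq\mu(U)=\Phi(U).
$$

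The main obstacle I anticipate is technical rather than conceptual: ensuring that the Lebesgue differentiation theorem for Radon measures transfers cleanly from Euclidean space to the quasi-metric framework of homogeneous Lie groups, where balls are defined through the homogeneous norm $|\cdot|$ rather than a genuine distance. This is handled by the two ingredients mentioned above -- the explicit doubling constant $2^{\nu}$ from the dilatations and a Vitali covering lemma for $|\cdot|$-balls -- after which the remainder of the argument is essentially a transcription of the classical Euclidean proof.
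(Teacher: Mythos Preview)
The paper does not actually prove Lemma~\ref{lem:AddFun}; it is quoted from \cite{VU04,VU05} without argument. So there is no ``paper's own proof'' to compare against, and your proposal should be judged on its own merits.

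Your reduction to the Lebesgue--Radon--Nikodym theory on the doubling space $(\mathbb{H},\rho,dx)$ is the natural measure-theoretic route, and parts (a)--(c) do follow immediately once you have a locally finite Borel measure $\mu$ agreeing with $\Phi$ on open sets. The one place I would push back is the sentence ``Countable additivity of $\Phi$ on disjoint open families and monotonicity on nested open sets are exactly the hypotheses required by the Carath\'eodory criterion.'' This is too fast. For $\mu^*(A)=\inf\{\Phi(U):A\subset U\text{ open}\}$ to be an outer measure you need countable \emph{sub}additivity of $\Phi$ on arbitrary (not disjoint) open covers, i.e.\ $\Phi(\bigcup_i U_i)\le\sum_i\Phi(U_i)$; the two stated properties (monotonicity and countable additivity on disjoint families) do not give this for free, because the natural ``disjointification'' $U_1,\,U_2\setminus\overline{U_1},\ldots$ need not reproduce the full union. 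You should either supply that step (for instance via an exhaustion by compacta and a finite-cover/shrinking argument in the metric space $\mathbb{H}$) or bypass it.

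The treatment in \cite{VU04,VU05} takes the bypass: it works directly with the set function $\Phi$ on balls, uses a Vitali-type covering lemma on $\mathbb{H}$ and the superadditivity $\sum_i\Phi(B_i)\le\Phi(\bigcup_i B_i)$ for disjoint balls to control the superlevel sets of the upper derivative, and obtains (a)--(c) without ever extending $\Phi$ to a Borel measure. That approach is slightly more hands-on but avoids the extension issue entirely and, incidentally, goes through under the weaker hypothesis of quasi-additivity. Your route is cleaner once the extension is justified; theirs is more robust.
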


We use the following countable additive property of norms of composition operators on Sobolev spaces \cite{VU04,VU05}.  Let $\Omega$, $\widetilde{\Omega}$ be domains on the homogeneous Lie group $\mathbb H$ and let $\varphi: \Omega \to \widetilde{\Omega}$ be a homeomorphism.  Define the set function $\Phi(\widetilde{A})$, $\widetilde{A}\subset \widetilde{\Omega}$ are open bounded subsets, by the rule
\begin{equation}
\label{set_fun}
\Phi(\widetilde{A}) = \sup\limits_{f \in L^{1}_{\infty}(\widetilde{A}) \cap C_0(\widetilde{A})}\left( \frac{\|\varphi^\ast(f)\|_{L^{1}_{q}(\Omega)}}{\|f\|_{L^{1}_{\infty}(\widetilde{A})}} \right)^{q},
\end{equation}
where $C_0(\widetilde{A})$ is the space of continuous functions supported in $\widetilde{A}$.  This set function $\Phi(\widetilde{A})$ is a minor modification of the set functions considered in \cite{VU04,VU05}, in that here we take the supremum over continuous functions 
$f \in L^1_{\infty}(\widetilde{A}) \cap C_0(\widetilde{A})$.

\begin{lem}
\label{mainlem}
Let $\Omega$, $\widetilde{\Omega}$ be domains on the  homogeneous Lie group $\mathbb H$. Suppose that a homeomorphism $\varphi: \Omega \to \widetilde{\Omega}$ generates a bounded composition operator
$$
\varphi^*: L^{1}_{\infty}(\widetilde\Omega) \to L^{1}_{q}(\Omega), \quad \varphi^*(f) = f \circ \varphi, \quad 1\leq q < \infty.
$$
Then the function $\Phi(\widetilde{A})$ defined by \eqref{set_fun} is a bounded monotone countable additive set function defined on open bounded subsets $\widetilde{A} \subset \widetilde\Omega$.
\end{lem}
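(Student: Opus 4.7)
The plan is to verify the three defining properties of a monotone countably additive set function---local finiteness (boundedness), monotonicity, and countable additivity---by leveraging the bounded composition operator hypothesis together with the fact that $\varphi$ is a homeomorphism.

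Boundedness and monotonicity follow from a zero-extension argument. Any $f\in L^1_\infty(\widetilde A)\cap C_0(\widetilde A)$ extends by zero to all of $\widetilde\Omega$ without changing its horizontal seminorm (the weak horizontal gradient still vanishes off $\widetilde A$), so the operator bound gives $\|\varphi^\ast(f)\|_{L^1_q(\Omega)}\leq\|\varphi^\ast\|\,\|f\|_{L^1_\infty(\widetilde A)}$, whence $\Phi(\widetilde A)\leq\|\varphi^\ast\|^q$. For monotonicity, if $\widetilde A_1\subset\widetilde A_2$, every admissible $f$ in the supremum defining $\Phi(\widetilde A_1)$ extends by zero to an admissible function for $\Phi(\widetilde A_2)$ with identical ratio.

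The main step is countable additivity. Fix pairwise disjoint bounded open sets $\widetilde A_i\subset\widetilde\Omega$ and set $\widetilde A=\bigcup_i\widetilde A_i$; since $\varphi$ is a homeomorphism, the preimages $\varphi^{-1}(\widetilde A_i)$ form a pairwise disjoint family of open subsets of $\Omega$. For the subadditive direction, take $f\in L^1_\infty(\widetilde A)\cap C_0(\widetilde A)$ and decompose $f_i=f\chi_{\widetilde A_i}\in L^1_\infty(\widetilde A_i)\cap C_0(\widetilde A_i)$; locality of the weak horizontal gradient on the disjoint open pieces gives
$$
\|f\|_{L^1_\infty(\widetilde A)}=\sup_i\|f_i\|_{L^1_\infty(\widetilde A_i)},\qquad \|\varphi^\ast(f)\|_{L^1_q(\Omega)}^q=\sum_i\|\varphi^\ast(f_i)\|_{L^1_q(\Omega)}^q,
$$
and combining these with the definition of $\Phi(\widetilde A_i)$ yields $\|\varphi^\ast(f)\|_{L^1_q(\Omega)}^q\leq\|f\|_{L^1_\infty(\widetilde A)}^q\sum_i\Phi(\widetilde A_i)$, so $\Phi(\widetilde A)\leq\sum_i\Phi(\widetilde A_i)$. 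For the reverse direction, given $\varepsilon>0$ and $N\in\mathbb{N}$, choose near-extremal competitors $f_i\in L^1_\infty(\widetilde A_i)\cap C_0(\widetilde A_i)$ with $\|f_i\|_{L^1_\infty(\widetilde A_i)}=1$ and $\|\varphi^\ast(f_i)\|_{L^1_q(\Omega)}^q\geq\Phi(\widetilde A_i)-\varepsilon 2^{-i}$; the glued function $f=\sum_{i=1}^N f_i$ belongs to $L^1_\infty(\widetilde A)\cap C_0(\widetilde A)$, has $\|f\|_{L^1_\infty(\widetilde A)}=1$, and the same additive identity furnishes $\Phi(\widetilde A)\geq\sum_{i=1}^N\Phi(\widetilde A_i)-\varepsilon$, closing the estimate as $N\to\infty$ and $\varepsilon\to 0$.

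The only real obstacle I anticipate is justifying the identity $\|\varphi^\ast(f)\|_{L^1_q(\Omega)}^q=\sum_i\|\varphi^\ast(f_i)\|_{L^1_q(\Omega)}^q$: one must check that the weak horizontal gradient of $\varphi^\ast(f)$ decomposes cleanly along the preimages $\varphi^{-1}(\widetilde A_i)$, vanishes off their union, and picks up no boundary contribution. This is where being a homeomorphism is essential, since it guarantees that $\{\varphi^{-1}(\widetilde A_i)\}$ is a pairwise disjoint family of open subsets of $\Omega$, and since $\varphi^\ast(f)$ is supported in their union (because $f\equiv 0$ off $\widetilde A$), locality of the weak gradient on open sets produces the decomposition. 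Everything else reduces to routine manipulation of the supremum in \eqref{set_fun}.
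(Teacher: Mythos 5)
Your proposal is correct and follows essentially the same route as the paper: monotonicity and boundedness via zero extension, and superadditivity via near-extremal competitors $f_i$ normalized so that $\|f_i\|_{L^1_\infty(\widetilde A_i)}=1$, glued over the disjoint open sets, using that the homeomorphism makes the supports of the gradients $\nabla_H\varphi^\ast(f_i)$ disjoint. The only difference is that you spell out the subadditive direction (via the decomposition $f_i=f\chi_{\widetilde A_i}$ and locality of the weak gradient), which the paper dismisses as "straightforward" -- your treatment of that step, including the observation that $\varphi^\ast(f)$ vanishes on the open set $\varphi^{-1}(\widetilde\Omega\setminus\supp f)$, is sound.
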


\begin{proof}
If $\widetilde{A}_1\subset \widetilde{A}_2$ are bounded open  subsets in $\widetilde{\Omega}$, then, extending
the functions of the space $C_0(\widetilde{A}_1)$ by zero on the set $\widetilde{A}_2$, we have the inclusion
$$
\big(L^{1}_{\infty}(\widetilde{A}_1)\cap C_0(\widetilde{A}_1)\big)\subset
\big(L^{1}_{\infty}(\widetilde{A}_2)\cap C_0(\widetilde{A}_2)\big).
$$
Hence we obtain the monotonicity of the set function $\Phi$:
\begin{multline}
\Phi(\widetilde{A}_1)=\sup\limits_{f\in
L^{1}_{\infty}(\widetilde{A}_1)\cap C_0(\widetilde{A}_1)} \Biggl(
\frac{\bigl\|\varphi^{\ast} (f)\bigr\|_{{L}_{q}^{1}(\Omega)}}
{\bigl\|f\bigr\|_{L^{1}_{\infty}(\widetilde{A}_1)}}
\Biggr)^{q}\\
\leq\sup\limits_{f\in
L^{1}_{\infty}(\widetilde{A}_2)\cap C_0(\widetilde{A}_2)} \Biggl(
\frac{\bigl\|\varphi^{\ast} (f)\bigr\|_{{L}_{q}^{1}(\Omega)}}
{\bigl\|f\bigr\|_{L^{1}_{\infty}(\widetilde{A}_2)}}
\Biggr)^{q}=\Phi(\widetilde{A}_2).
\nonumber
\end{multline}

Now, let $\widetilde{A}_{i}$, $i=1,2,...$, be open disjoint subsets of $\widetilde{\Omega}$ and denote 
$\widetilde{A}_0=\bigcup\limits_{i=1}^{\infty}\widetilde{A}_i$.
For $i=1,2,...$ we consider functions $f_i\in L_{\infty}^{1}(\widetilde{A}_i)\cap C_0(\widetilde{A}_i)$ which satisfy the following conditions
$$
\bigl\|\varphi^{\ast} f_i\bigr\|^q_{{L}_{q}^{1}(\Omega)}\geq \bigl(\Phi(\widetilde{A}_i)\bigl(1-{\frac{\varepsilon}{2^i}}\bigr)\bigr)\bigl\|f_i\bigr\|^q_{L_{\infty}^{1}(\widetilde{A}_i)}
$$
and
$$
\bigl\|f_i\bigr\|^q_{L^{1}_{\infty}(\widetilde{A}_i)}=1,\,\,i=1,2,...,
$$
where $\varepsilon\in(0,1)$ is a fixed number.
Letting $f_N=\sum\limits_{i=1}^{N}f_i$ and $A_i=\varphi^{-1}(\widetilde{A}_i)$, we obtain
\begin{multline}
\bigl\|\varphi^{\ast} \left(f_N\right) \bigr\|^q_{{L}_{q}^{1}(\Omega)} \geq \sum\limits_{i=1}^{N} \bigl\|\varphi^{\ast} (f_i) \bigr\|^q_{{L}_{q}^{1}(A_i)} \\ \geq
\sum\limits_{i=1}^{N}\left(\Phi(\widetilde{A}_i)\left(1-{\frac{\varepsilon}{2^i}}\right)\right)
\bigl\|f_i\bigr\|_{L_{\infty}^{1}(\widetilde{A}_i)}^q
=
\sum\limits_{i=1}^{N}\Phi(\widetilde{A}_i)
\left(1-{\frac{\varepsilon}{2^i}}\right)
\bigl\|f_N\bigr\|^q_{L_{\infty}^{1} \Bigl(\bigcup\limits_{i=1}^{N}\widetilde{A}_i\Bigr)}
\\
\geq
\biggl(\sum\limits_{i=1}^{N}\Phi(\widetilde{A}_i)
-\varepsilon\Phi(\widetilde{A}_0) \biggr)
\bigl\|f_N\bigr\|^q_{L_{\infty}^{1} \Bigl(\bigcup\limits_{i=1}^{N}\widetilde{A}_i\Bigr)},
\nonumber
\end{multline}
since the sets, on which the gradients $\nabla\varphi^{\ast} (f_i)$
do not vanish, are disjoint. Hence, it follows that
$$
\Phi(\widetilde{A}_0)^{\frac{1}{q}}\geq\sup\frac
{\bigl\|\varphi^{\ast} g_N\mid{L}_{q}^{1} (\Omega)\bigr\|}
{\biggl\|g_N\mid L_{\infty}^{1}
\Bigl(\bigcup\limits_{i=1}^{N}\widetilde{A}_i\Bigr)\biggr\|}\geq
\biggl(\sum\limits_{i=1}^{N}\Phi(\widetilde{A}_i)-\varepsilon\Phi(\widetilde{A}_0)
\biggr)^{\frac{1}{q}},
$$
where the least upper bound is taken over all the
above-mentioned functions $g_N\in L_{\infty}^{1}
\Bigl(\bigcup\limits_{i=1}^{N}\widetilde{A}_i\Bigr)\cap C_0\Bigl(\bigcup\limits_{i=1}^{N}\widetilde{A}_i\Bigr)$. Since both
$N$ and $\varepsilon$ are arbitrary, we have
$$
\sum\limits_{i=1}^{\infty}\Phi(\widetilde{A}_i)
\leq \Phi\Bigl(\bigcup\limits_{i=1}^{\infty}\widetilde{A}_i\Bigr).
$$
The validity of the inverse inequality can be proved in a straightforward manner.
\end{proof}

Let $\varphi: \Omega \to \widetilde{\Omega}$ be a homeomorphism between domains $\Omega, \widetilde{\Omega} \subset \mathbb{H}$.  
The volume derivative of $\varphi$ at a point $x \in \Omega$ is defined by  

\[
\varphi'_v(x) = \lim_{r \to 0} \frac{|\varphi(B(x,r))|}{|B(x,r)|}.
\]

Since $\varphi$ is a homeomorphism, the set function $\Phi(A) := |\varphi(A)|$ is monotone and countably additive on the Borel subsets of $\Omega \subset \mathbb{H}$.  Therefore, by Lemma~\ref{lem:AddFun} (see also \cite{VU04,VU05}), the volume derivative $\varphi'_v$ is finite for almost every $x \in \Omega$ and belongs to the Lebesgue space $L_{1,\mathrm{loc}}(\Omega)$.

Let us formulate the change of variable formula in the Lebesgue integral \cite{VU04,VU05} in the case of homeomorphisms with the volume derivative.

\begin{thm}
Let $\varphi: \Omega \to \widetilde{\Omega}$ be a homeomorphism of domains $\Omega,\widetilde{\Omega}\subset \mathbb H$. 
Then there exists a Borel set $S\subset \Omega$, $|S|=0$ such that  the mapping $\varphi:\Omega\setminus S \to \widetilde{\Omega}$ has the Luzin $N$-property and the change of variables formula
\begin{equation}
\label{chv}
\int\limits_E f\circ\varphi (x) \varphi'_v(x)~dx=\int\limits_{\varphi(E\setminus S)} f(y)~dy
\end{equation}
holds for every measurable set $E\subset \Omega$ and every non-negative measurable function $f: \widetilde{\Omega}\to\mathbb R$.
\end{thm}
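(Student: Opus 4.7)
The plan is to treat $\Phi(A):=|\varphi(A)|$ as a locally finite Borel measure on $\Omega$ and combine the Lebesgue decomposition with the differentiation theorem encoded in Lemma~\ref{lem:AddFun} to extract both the exceptional set $S$ and the change of variables formula. Since $\varphi$ is a homeomorphism, it maps Borel sets bijectively to Borel sets in $\widetilde{\Omega}$ and compact subsets of $\Omega$ to compact subsets of $\widetilde{\Omega}$, so $\Phi$ is a genuine locally finite Borel measure on $\Omega$. Lemma~\ref{lem:AddFun}(a)--(b) then yields that $\varphi'_v(x)=\lim_{r\to 0}\Phi(B(x,r))/|B(x,r)|$ exists finitely almost everywhere and is measurable, and part (c) shows $\varphi'_v\in L_{1,\loc}(\Omega)$.

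I would next apply the Lebesgue decomposition $\Phi=\Phi_a+\Phi_s$ with respect to the bi-invariant Haar measure $dx$. Standard differentiation theory identifies $\varphi'_v$ with the Radon--Nikodym density of $\Phi_a$, while $\Phi_s$ is concentrated on some Borel set $S\subset\Omega$ with $|S|=0$. For every Borel set $E\subset\Omega$ one has
$$
|\varphi(E\setminus S)|=\Phi(E\setminus S)=\Phi_a(E\setminus S)=\int_{E\setminus S}\varphi'_v(x)\,dx=\int_E\varphi'_v(x)\,dx.
$$
Specializing to $E\subset\Omega\setminus S$ with $|E|=0$ forces $|\varphi(E)|=0$, which is precisely the Luzin $N$-property of $\varphi$ on $\Omega\setminus S$.

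For the change of variables I would first take $f=\chi_B$ with $B\subset\widetilde{\Omega}$ Borel. Bijectivity of $\varphi$ gives $\varphi((E\cap\varphi^{-1}(B))\setminus S)=\varphi(E\setminus S)\cap B$, and the displayed identity applied to the Borel set $E\cap\varphi^{-1}(B)$ yields
$$
\int_E\chi_B(\varphi(x))\varphi'_v(x)\,dx=\int_{E\cap\varphi^{-1}(B)}\varphi'_v(x)\,dx=|\varphi(E\setminus S)\cap B|=\int_{\varphi(E\setminus S)}\chi_B(y)\,dy.
$$
Linearity over nonnegative Borel simple functions followed by monotone convergence extends the identity to arbitrary nonnegative Borel $f$; inner regularity of Lebesgue measurable sets by Borel sets, together with the Luzin $N$-property on $\Omega\setminus S$ (which ensures that the null-set discrepancies contribute nothing on either side), lets one pass from Borel $E$ to a general measurable $E$.

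The main obstacle is the handling of the singular part $\Phi_s$: a homeomorphism need not enjoy the Luzin $N$-property, and in that case $\Phi_s$ carries genuine mass supported on a Lebesgue null set. The decomposition step isolates this defect onto the Borel carrier $S$, which simultaneously explains why $S$ must appear on the right-hand side as $\varphi(E\setminus S)$ rather than $\varphi(E)$, and why Luzin $N$ is recovered precisely after removing $S$. Every other step of the argument is a standard measure-theoretic reduction once the differentiation statement of Lemma~\ref{lem:AddFun} is in hand.
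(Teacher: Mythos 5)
Your proposal is correct, and it is essentially the standard argument behind this statement: the paper itself states the theorem without proof, citing \cite{VU04,VU05}, and the proof given there proceeds exactly as you do, via the Lebesgue decomposition of the Radon measure $A\mapsto|\varphi(A)|$ together with the differentiation theorem for measures (valid here because the Haar measure on $\mathbb H$ with the homogeneous metric is doubling, so $\varphi'_v$ coincides a.e.\ with the Radon--Nikodym density of the absolutely continuous part). One small point you leave implicit: the theorem asserts the formula for every non-negative \emph{Lebesgue measurable} $f$, while your monotone-convergence step only reaches Borel $f$; the passage to measurable $f$ requires replacing $f$ by a Borel representative $\tilde f$ with $f=\tilde f$ off a null set $N\subset\widetilde\Omega$ and observing that $\int_{\varphi^{-1}(N)}\varphi'_v\,dx=|N\cap\varphi(\Omega\setminus S)|=0$, so that $f\circ\varphi\cdot\varphi'_v=\tilde f\circ\varphi\cdot\varphi'_v$ almost everywhere. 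With that one-line addition the argument is complete.
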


In the present article we give the analytical characterization of measurable homeomorphisms $\varphi: \Omega \to\widetilde{\Omega}$ which generate a bounded composition operator
$$
\varphi^*: L^{1}_{\infty}(\widetilde\Omega) \to L^{1}_{q}(\Omega), \quad \varphi^*(f) = f \circ \varphi, \quad 1\leq q < \infty.
$$
Recall that the homeomorphisms  $\varphi$ is called measurable in the sense of \cite{Z69}, if a preimage of a Lebesgue measurable set is Lebesgue measurable set. It is equivalent, that the homeomorphism  $\varphi: \Omega \to\widetilde{\Omega}$ has the Luzin $N^{-1}$-property \cite{H50}. Hence, by the change of variables formula \eqref{chv}, for measurable homeomorphisms  $\varphi$ the volume derivative $\varphi'_v>0$ a.e. in $\Omega$.

In the next assertion we recall Theorem~\ref{sobolev}~from the Introduction, 
which characterizes Sobolev mappings in terms of composition operators 
on spaces of Lipschitz functions.

\medskip
\noindent
\textbf{Theorem~\ref{sobolev}.}
\textit{Let $\Omega$, $\widetilde{\Omega}$ be domains on the homogeneous Lie group $\mathbb{H}$. 
Then a measurable homeomorphism $\varphi: \Omega \to \widetilde{\Omega}$ generates a bounded composition operator
\[
\varphi^*: L^{1}_{\infty}(\widetilde{\Omega}) \to L^{1}_{q}(\Omega), 
\quad \varphi^*(f) = f \circ \varphi, 
\quad 1 \leq q < \infty,
\]
if and only if $\varphi \in L^{1}_{q}(\Omega; \widetilde{\Omega})$.}

\begin{proof} \textit{Necessity.}
Suppose $z \in \widetilde{\Omega}$ be an arbitrary basepoint. 
The functions $d_z(y) = d(y, z)$, $z \in \widetilde{\Omega}$, are $1$-Lipschitz, with 
\[
\nabla_{H} d_z (y) = 1 \quad \text{for almost all } y \in \widetilde{\Omega},
\]
see, for example, \cite[Remark~2.1 and \S 4]{HK00} and \cite[Example~4.1.2, \S 13.3]{HKST}, 
and belong to the Sobolev space $L^{1}_{\infty}(\widetilde{\Omega})$.

By Lemma \ref{mainlem} for every open set $\widetilde{A} \subset \widetilde{\Omega}$ and for every continuous function $f \in L^{1}_{\infty}(\widetilde{\Omega})$, $\supp f \subset \widetilde{A}$, the following inequality holds:
\begin{equation}
\label{eq1}
\|\varphi^*(f)\|_{L^{1}_{q}(\varphi^{-1}(\widetilde{A}))} \leq (\Phi(\widetilde{A}))^{\frac{1}{q}} \|f\|_{L^{1}_{\infty}(\widetilde{A})}.
\end{equation}

Define a smooth cutoff function:
\begin{equation*}
\eta(t)=
\begin{cases}
1,&\text{if}\,\, 0\leq t \leq 1, \\ 
e^{-\frac{t-1}{2-t}},&\text{if}\,\, 1 < t < 2, \\ 
0,&\text{if}\,\, 2\leq t<\infty.
\end{cases}
\end{equation*}

Fix an arbitrary point $y_0 \in \widetilde{\Omega}$. We denote by $B(y_0, r)\subset\widetilde{\Omega}$ a ball with a center at the point $y_0$ and the given radius $r>0$. Since $\varphi$ is the homeomorphism, $\varphi^{-1}(B(y_0, r))$ is an open connected set.    
We consider test functions defined by the equation:
$$
f_z(y) = (d_z(y) -d_z(y_0)) \eta\left(\frac{d(y,y_0)}{r}\right),\quad z \in \widetilde{\Omega}.
$$

Then, for the function $[\varphi]_{z}(x) = d(\varphi(x), z)$, we have 
$(f_z \circ \varphi)(x) = [\varphi]_{z}$ for all $x \in \varphi^{-1}(B)$ and $|\nabla_{H} (f_z \circ \varphi)| = |\nabla_{H} [\varphi]_{z}|$ for almost all $x \in \varphi^{-1}(B)$. 

The following estimates of $\nabla_{H}f_z$ were obtained in \cite{VU98}:
\begin{multline*}
|\nabla_{H}f_z (y)| \leq 
\left| \nabla_{H}\big(d_z(y)-d_z(y_0)\big) \cdot 
       \eta\!\left(\frac{d(y,y_0)}{r}\right) \right| \\
+ \left| d_z(y)-d_z(y_0) \right| \cdot 
  \left| \eta'\!\left(\frac{d(y,y_0)}{r}\right) \right| \cdot 
  \frac{\left| \nabla_{H} d(y,y_0) \right|}{r} \\
= \left| \nabla_{H}d_z(y) \right| \cdot 
  \left| \eta\!\left(\frac{d(y,y_0)}{r}\right) \right| 
+ \frac{\left| d_z(y)-d_z(y_0) \right|}{r} \cdot 
  \left| \eta'\!\left(\frac{d(y,y_0)}{r}\right) \right|.
\end{multline*}

Since $d_z$ are $L$-Lipschitz functions, with $L=1$ by the reverse triangle inequality, we have 
$$
|(d_z(y)-d_z(y_0))|\leq d(y,y_0)\leq r.
$$
Hence 
$$
|\nabla_{H}f_z (y)| \leq \left|\eta\left(\frac{d(y,y_0)}{r}\right)\right| 
+ \left|\eta'\left(\frac{d(y,y_0)}{r}\right)\right|
\leq 1+\left|\eta'\left(\frac{d(y,y_0)}{r}\right)\right|
$$
for almost all $y\in B(y_0,r)$.

Since $\eta \in C^\infty_0(\mathbb{R})$, then the derivative $\eta'\left(\frac{d(y,y_0)}{r}\right)$ is bounded and we obtain that there exists a constant $C<\infty$, such that 
$$
|\nabla_{H}f_z (y)| \leq 1+C,
$$
for almost all $y\in B(y_0,r)$.

Hence, the inequality (\ref{eq1}) implies that
$$
\int_{\varphi^{-1}(B)} |\nabla_{H} [\varphi]_{z} (x)|^q \, dx \leq \widetilde{C}^q_{q} \cdot \Phi(2B),
$$
since $\operatorname{supp} f_z \subset 2B$.

By the change of variables formula \eqref{chv} and taking into account that the volume derivative of a measurable homeomorphism  $\varphi'_v(x)>0$ a.e. in $\Omega$, we have
\begin{multline*}
\int_{\varphi^{-1}(B)} |\nabla_{H} [\varphi]_{z} (x)|^q~dx=\int_{\varphi^{-1}(B)\setminus S} |\nabla_{H} [\varphi]_{z} (x)|^q~dx
\\
=\int_{\varphi^{-1}(B)\setminus S} \frac{|\nabla_{H} [\varphi]_{z} (x)|^q}{\varphi'_v(x)}\varphi'_v(x)~dx=
\int_{B\setminus \varphi(S)} \frac{|\nabla_{H} [\varphi]_{z} (\varphi^{-1}(y))|^q}{\varphi'_v(\varphi^{-1}(y))}~dy.
\end{multline*}
Hence
$$
\int_{B\setminus \varphi(S)} \frac{|\nabla_{H} [\varphi]_{z} (\varphi^{-1}(y))|^q}{\varphi'_v(\varphi^{-1}(y))}~dy \leq \widetilde{C}^q_{q} \cdot \Phi(2B).
$$

Further, divide both sides by the measure of the ball $|B|$ and, applying the Lebesgue Differentiation Theorem as $r \to 0$ at the density points of the set $B \setminus \varphi(S)$, we obtain:
\begin{equation}\label{est}
\frac{(\nabla_{H} [\varphi]_{z} (\varphi^{-1}(y)))^q}{\varphi'_v(\varphi^{-1}(y))} \leq 2^{\nu}\widetilde{C}^q_{q} \cdot \Phi'(y) \quad \text{for almost all } y \in \widetilde{\Omega}\setminus \varphi(S).
\end{equation}

Since $\varphi$ is a measurable homeomorphism and $|S|=0$, the above inequality equivalents to the equation
\begin{equation}
\label{eqpoint}
		(\nabla_{H} [\varphi]_{z} (x))^q \leq 2^{\nu}\widetilde{C}^q_{q} \cdot \Phi'(\varphi(x))\varphi'_v(x)<\infty,
\end{equation}
which holds for almost all  $x\in \Omega$ and for all $z\in \widetilde{\Omega}$.

Hence, 
\begin{multline*}
\int\limits_{U}(\nabla_{H} [\varphi]_{z} (x))^q~dx \leq  2^{\nu}\widetilde{C}^q_{q} \int\limits_{U} \Phi'\left(\varphi(x)\right) \varphi'_v(x)~dx \\
    \leq 2^{\nu}\widetilde{C}^q_{q} \int\limits_{\widetilde{U}} \Phi'(y)~dy\leq 2^{\nu}\widetilde{C}^q_{q}\Phi(\widetilde{U}) <\infty,
\end{multline*}
for any bounded open set $U\subset\Omega$, $\overline{U}\subset\Omega$ and $\widetilde{U}=\varphi(U)$.

Therefore, the function $g(x):=\Phi'(\varphi(x))\cdot \varphi'_v(x)$ is an upper gradient of the homeomorphism $\varphi: \Omega \to \widetilde{\Omega}$ and $\varphi$ belongs to the Sobolev class $L^{1}_{q}(\Omega; \widetilde{\Omega})$, see Lemma~2 in~\cite{VU98} for details.

\textit{Sufficiency.}  Let $f\in L^1_{\infty}(\widetilde{\Omega})$. Then, since a homeomorphism $\varphi^{-1}$ has the Luzin $N$-property, the chain rule holds, and we have
\begin{multline*}
\|\varphi^{\ast}(f)\|_{L^1_{q}({\Omega})}=\left(\int\limits_{\Omega}|\nabla_H(f\circ\varphi)(x)|^q~dx\right)^{\frac{1}{q}}\leq
\left(\int\limits_{\Omega}|D_H\varphi(x)|^q|(\nabla_H f)(\varphi(x))|^q~dx\right)^{\frac{1}{q}}\\
\leq
\left(\int\limits_{\Omega}|D_H\varphi(x)|^q~dx\right)^{\frac{1}{q}}\cdot \ess\sup_{x\in\Omega}|(\nabla_H f)(\varphi(x))|(\varphi(x))=
\|\varphi\|_{L^1_q(\Omega;\widetilde{\Omega})}\cdot \|f\|_{L^1_{\infty}(\Omega)}.
\end{multline*} 

Hence, the composition operator
$$
\varphi^*: L^{1}_{\infty}(\widetilde\Omega) \to L^{1}_{q}(\Omega), \quad \varphi^*(f) = f \circ \varphi, \quad 1\leq q < \infty,
$$
is bounded.		
\end{proof}

Because in the proof of the necessity of Theorem~\ref{sobolev} we use test functions $f_z(y)$ belonging to  $L^{1}_{\infty}(\widetilde\Omega)\cap L^{1}_{p}(\widetilde\Omega)$, for any $1\leq q\leq p<\infty$, we can formulate the following assertion.

\begin{lem}
\label{sobolev2}
Let $\Omega$, $\widetilde{\Omega}$ be domains on the  homogeneous Lie group $\mathbb H$. Suppose that a measurable homeomorphism $\varphi: \Omega \to\widetilde{\Omega}$ generates a bounded composition operator
$$
\varphi^*: L^{1}_{\infty}(\widetilde\Omega)\cap L^{1}_{p}(\widetilde\Omega) \to L^{1}_{q}(\Omega), \quad \varphi^*(f) = f \circ \varphi, \quad 1\leq q\leq p < \infty.
$$
Then $\varphi$ belongs to the Sobolev space $L^{1}_{q}(\Omega; \widetilde{\Omega})$.
\end{lem}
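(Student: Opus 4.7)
The plan is to repeat the necessity argument of Theorem~\ref{sobolev} with a modified set function tailored to the new intersection domain $L^{1}_{\infty}\cap L^{1}_{p}$ of the composition operator. Concretely, for open bounded subsets $\widetilde{A}\subset\widetilde\Omega$ I define
\begin{equation*}
\Phi_p(\widetilde{A})=\sup\left(\frac{\|\varphi^\ast(f)\|_{L^{1}_{q}(\Omega)}}{\|f\|_{L^{1}_{\infty}(\widetilde{A})}}\right)^q,
\end{equation*}
where the supremum is taken over $f\in L^{1}_{\infty}(\widetilde{A})\cap L^{1}_{p}(\widetilde{A})\cap C_0(\widetilde{A})$. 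The crucial feature, pointed out in the remark preceding the lemma, is that the cutoff test functions $f_z$ constructed in the proof of Theorem~\ref{sobolev} are continuous with uniformly bounded horizontal gradient and compact support in $2B$, so they lie in $L^{1}_{\infty}(2B)\cap L^{1}_{p}(2B)\cap C_0(2B)$ and are admissible in the supremum defining $\Phi_p(2B)$.

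The first technical step is to verify the analogue of Lemma~\ref{mainlem} for $\Phi_p$. Monotonicity follows, as in Lemma~\ref{mainlem}, by extension by zero. Local finiteness is obtained by noting that every admissible $f$ with $\|f\|_{L^{1}_{\infty}(\widetilde{A})}=1$ satisfies $\|f\|_{L^{1}_{p}(\widetilde{A})}\leq|\widetilde{A}|^{1/p}$, so that boundedness of $\varphi^\ast\colon L^{1}_{\infty}\cap L^{1}_{p}\to L^{1}_{q}$ yields $\Phi_p(\widetilde{A})\leq\|\varphi^\ast\|^q(1+|\widetilde{A}|^{1/p})^q<\infty$. For countable additivity the argument of Lemma~\ref{mainlem} carries over once one checks that the sum $f_N=\sum_{i=1}^N f_i$ of near-extremal $f_i\in L^{1}_{\infty}(\widetilde{A}_i)\cap L^{1}_{p}(\widetilde{A}_i)\cap C_0(\widetilde{A}_i)$, normalized by $\|f_i\|_{L^{1}_{\infty}(\widetilde{A}_i)}=1$ on disjoint open $\widetilde{A}_i\subset\widetilde{A}_0$, remains admissible for $\Phi_p(\widetilde{A}_0)$: indeed the disjoint supports give
\begin{equation*}
\|f_N\|_{L^{1}_{\infty}(\widetilde{A}_0)}=1,\qquad \|f_N\|_{L^{1}_{p}(\widetilde{A}_0)}^p=\sum_{i=1}^N\|f_i\|_{L^{1}_{p}(\widetilde{A}_i)}^p\leq \sum_{i=1}^N|\widetilde{A}_i|\leq|\widetilde{A}_0|,
\end{equation*}
so the remaining manipulation is identical to that of Lemma~\ref{mainlem}.

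With $\Phi_p$ thus identified as a locally finite monotone countably additive set function, Lemma~\ref{lem:AddFun} supplies the almost-everywhere finite derivative $\Phi_p'$. From this point the proof copies the necessity part of Theorem~\ref{sobolev} verbatim: the inequality $\|\varphi^\ast(f_z)\|^q_{L^{1}_{q}(\varphi^{-1}(B))}\leq \Phi_p(2B)\|f_z\|^q_{L^{1}_{\infty}(2B)}$ follows from the definition of $\Phi_p$, the change of variables formula \eqref{chv} transports the integral to a ball in $\widetilde\Omega$, and dividing by $|B|$ and invoking Lebesgue differentiation at density points yields the pointwise estimate
\begin{equation*}
(\nabla_{H}[\varphi]_z(x))^q\leq 2^\nu\widetilde{C}^q_q\,\Phi_p'(\varphi(x))\,\varphi'_v(x)
\end{equation*}
for almost every $x\in\Omega$ and every $z\in\widetilde\Omega$. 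Integrating over a bounded open $U\subset\Omega$ with $\widetilde U=\varphi(U)$ and applying the change of variables once more shows that $g(x):=\Phi_p'(\varphi(x))\varphi'_v(x)$ is a common upper gradient of all the distance functions $[\varphi]_z$, from which $\varphi\in L^{1}_{q}(\Omega;\widetilde\Omega)$ follows exactly as in the proof of Theorem~\ref{sobolev}.

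The principal subtlety, rather than a genuine obstacle, is preserving countable additivity under the extra $L^{1}_{p}$ constraint on admissible test functions; this is precisely where the bookkeeping $\sum_i\|f_i\|_{L^{1}_{p}(\widetilde{A}_i)}^p\leq|\widetilde{A}_0|$ is used to keep $f_N$ in the admissible class. Once this observation is in place, the remainder of the proof reduces to the scheme already established for Theorem~\ref{sobolev}.
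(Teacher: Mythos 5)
Your proposal is correct and follows essentially the same route as the paper: the paper obtains this lemma directly from the necessity argument of Theorem~\ref{sobolev}, observing only that the test functions $f_z$ used there already belong to $L^{1}_{\infty}(\widetilde\Omega)\cap L^{1}_{p}(\widetilde\Omega)$. Your explicit verification that the modified set function $\Phi_p$ remains locally finite, monotone and countably additive on the restricted admissible class is a careful elaboration of a step the paper leaves implicit.
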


\begin{rem}
Theorem~\ref{sobolev} can be generalized on the metric measure spaces.
\end{rem}

\section{Declarations}

\begin{itemize}

\item \textbf{Funding}: Not applicable.

\item \textbf{Conflict of interest/Competing interests}: The author declares that there is no conflict of interest.

\item \textbf{Ethics approval}: Not applicable.

\item \textbf{Availability of data and materials}: Data sharing not applicable to this article as no datasets were generated or analysed during the current study.

\item \textbf{Authors' contributions}: Not applicable. 

\end{itemize}

\vskip 0.4cm

\noindent
{\bf Acknowledgments}:
The author would like to thank A.~Agrachev for useful discussions on the Chow--Rashevskii theorem.

\vskip 0.4cm

\noindent
Alexander Ukhlov;\\ 
Department of Mathematics, \\
Ben-Gurion University of the Negev,\\ 
P.O.Box 653, Beer Sheva, 8410501, Israel. 

\noindent							
\emph{E-mail address:} \email{ukhlov@math.bgu.ac.il} \\


\begin{thebibliography}{References}

\bibitem{ABB} A.~Agrachev, D.~Barilari, U.~Boscain, A Comprehensive Introduction to sub-Riemannian
Geometry, Cambridge, Cambridge University Press, (2020).

\bibitem{A90} L.~Ambrosio, Metric space valued functions of bounded variation, Scuola Norm. Sup. Pisa CI. Sci., 17 (1990), 439--478.

\bibitem{AM90} L.~Ambrosio, G.~Dal Maso, A general chain rule for distributional derivatives, Proc. Amer. Math. Soc., 108 (1990), 691--702.

\bibitem{CH} W.~L.~Chow, {Systeme von linearen partiellen differential gleichungen erster ordnung}, Math. Ann., 117 (1939), 98--105.

\bibitem{CHM}  M.~Cs\"ornyei, S.~Hencl, J.~Mal\'y, Homeomorphisms in the Sobolev space $W^{1,n-1}$, J. Reine Angew. Math., 644 (2010), 221--235.

\bibitem{E15} N.~ A.~Evseev, Composition operators in weighted Sobolev spaces on the Carnot group, Siberian Math. J., 56 (2015), 1042--1059.

\bibitem{Fe} H.~Federer, Geometric measure theory, Springer-Verlag, Berlin, (1969).

\bibitem{GPU24-N} P.~Garain, V.~Pchelintsev, A.~Ukhlov, On the Neumann $(p,q)$-eigenvalue problem in H\"older singular domains, Calc. Var., 63 (2024), 172, 17.

\bibitem{FS} G.~B.~Folland, E.~M.~Stein, {Hardy spaces on homogeneous group}, Princeton Univ. Press., Princeton, (1982).

\bibitem{GPU18} V.~Gol'dshtein, V.~Pchelintsev, A.~Ukhlov, On the First Eigenvalue of the Degenerate p-Laplace Operator in Non-convex Domains, Integral Equations Operator Theory, 90 (2018), 90:43.

\bibitem{GS82} V.~M.~Gol'dshtein, V.~N.~Sitnikov, Continuation of functions of the class $W^1_p$ across H\"older boundaries, Imbedding theorems and their applications, 31-43, Trudy Sem. S. L. Soboleva, No. 1, (1982), Akad. Nauk SSSR
Sibirsk. Otdel., Inst. Mat., Novosibirsk, (1982).

\bibitem{GU09} V.~Gol'dshtein, A.~Ukhlov, Weighted Sobolev spaces and embedding theorems, Trans. Amer. Math. Soc., 361 (2009), 3829--3850.

\bibitem{GU10M}	V.~Gol'dshtein, A.~Ukhlov, Sobolev homeomorphisms and composition operators, Sobolev spaces in mathematics, Int. Math. Ser (N. Y.), 11 (2010), 207--220.

\bibitem{GU10} V.~Gol'dshtein, A.~Ukhlov, About homeomorphisms that induce composition operators on Sobolev spaces, 
Complex Var. Elliptic Equ., 55 (2010), 833--845.

\bibitem{GU17} V.~Gol'dshtein, A.~Ukhlov, The spectral estimates for the Neumann-Laplace operator in space domains, 
Adv. in Math., 315 (2017), 166--193.

\bibitem{GU24}	V. Gol'dshtein, A. Ukhlov, Composition operators on Sobolev spaces and Ball's classes, Pure Appl. Funct. Anal., 9 (2024), 93--109.

\bibitem{GZ24} A.~V.~Greshnov, R.~I.~Zhukov, Optimal estimates of the number of links of basis horizontal broken lines for $2$-step Carnot groups with horizontal distribution of corank $1$, Russian Universities Reports. Mathematics, 29 (2024), 244-254.

\bibitem{GE25} P.~Creutz, N.~Evseev, Weak differentiability of metric space valued Sobolev maps, Proc. Amer. Math. Soc., (in press).  

\bibitem{HK00} P.~Hajlasz, P.~Koskela, Sobolev met Poincar\'e, Mem. Amer. Math. Soc., 145 (2000), no. 688, x+101.

\bibitem{H50} P.~R.~Halmos, Measure Theory, Springer Science Business Media, New York, (1950)

\bibitem{HKST} J.~Heinonen, P.~Koskela, N.~Shanmugalingam, J.~T.~Tyson, Sobolev Spaces on Metric Measure Spaces: An Approach based on Upper Gradients, Cambridge University Press, 2015.

\bibitem{Ho67} L.~H\"ormander: Hypoelliptic second order differential equations, Acta Math., 119 (1967), 147--171.

\bibitem{KS} N.~Korevaar, R.~M.~Schoen, Sobolev spaces and harmonic maps for metric space targets, Comm. Anal. Geom., 1 (1993), 1--99.

\bibitem{VM01} V. Magnani, Differentiability and area formula on stratified Lie groups, Houston J. Math., 27 (2001), 297--323.

\bibitem{MRSY} O.~Martio, V.~Ryazanov, U.~Srebro, E.~Yakubov, Moduli in modern mapping theory, Springer Monographs in Mathematics. Springer, New York, 2009.

\bibitem{M69} V.~G.~Maz'ya, Weak solutions of the Dirichlet and Neumann problems, Trudy Moskov. Mat. Ob-va., 20 (1969), 137--172.

\bibitem{MU24_1}	A.~Menovschikov, A.~Ukhlov, Capacity of rings and mappings generate embeddings of Sobolev spaces, J. Math. Anal. Appl., 531 (2024), 127826.

\bibitem{MU24_2}	A.~Menovschikov, A.~Ukhlov, Composition operators on Sobolev spaces and Q-homeomorphisms, Comput. Methods Funct. Theory, 24 (2024), 149--162.

\bibitem{MU}	A.~Menovschikov, A.~Ukhlov, On mappings generating embedding operators in Sobolev classes on metric measure spaces, J. Math. Anal. Appl., (2025).

\bibitem{MV} A.~Molchanova, S.~Vodop'yanov, Injectivity almost everywhere and mappings with finite distortion in nonlinear elasticity, Calc. Var., 59 (2020), 17.

\bibitem{Pa} P.~Pansu, {M\'etriques de Carnot--Carath\'eodory et quasiisom\'etries des espaces sym\'et\-ri\-qu\-es de rang un}, Ann.
Math., (1989), 1--60.

\bibitem{R38} P.~K.~Rashevskii, On the connectivity of any two points in a completely nonholonomic space by an admissible line. Sci. Notes Moscow State Pedagogical Inst. Series: Physics and Mathematics, 3 (1938), 83--94.

\bibitem{R97} Yu.~G.~Reshetnyak, Sobolev classes of functions with values in a metric space, Siberian Math. J., 38:3 (1997), 567--583.

\bibitem{U93} A.~D.~Ukhlov, On mappings, which induce embeddings of Sobolev spaces, Siberian Math. J., 34 (1993), 185--192.

\bibitem{U00} A.~D.~Ukhlov, Sobolev spaces and differential properties of homeomorphisms on Carnot groups, Inst. Appl. Math. Vladivistok, 2000.

\bibitem{U11} A.~D.~Ukhlov, Composition operators in weighted Sobolev spaces on Carnot groups, Acta Math. Hungar., 133 (2011), 103--127. 

\bibitem{V88} S.~K.~Vodop'yanov, Taylor Formula and Function Spaces, Novosibirsk Univ. Press., 1988.

\bibitem{V99} S.~K.~Vodop'yanov, Mappings with bounded distortion and with finite distortion on Carnot groups, Siberian Math. J., 40 (1999), 644-677.  

\bibitem{V00} S.~K.~Vodop'yanov, $P$-Differentiability on Carnot Groups in Different Topologies and Related Topics,
Proceedings on Analysis and Geometry, Novosibirsk: Sobolev Institute Press. (2000), 603--670.

\bibitem{V12} S.~K.~Vodop'yanov, Regularity of mappings inverse to Sobolev mappings, Sbornik: Mathematics, 203 (2012), 1383--1410.

\bibitem{V20} S.~K.~Vodop'yanov, On the analytic and geometric properties of mappings in the theory of $\mathscr{Q}_{q,p}$-homeomorphisms, Math. Notes, 108 (2020), 889--894.

\bibitem{VCh1} S.~K. Vodop'yanov, V.~M. Chernikov,  {Sobolev spaces and hypoelliptic equations. I}, Siberian Advances in Mathematics, 6 (1996), 27--67.

\bibitem{VCh2} S.~K.~Vodop'yanov, V.~M.~Chernikov, {Sobolev spaces and hypoelliptic equations. II}, Siberian Advances in Mathematics, 6 (1996), 64--96.

\bibitem{VE15} S.~K.~Vodop'yanov, N.~A.~Evseev, Isomorphisms of Sobolev spaces on Carnot groups and quasiconformal mappings, Siberian Math. J., 56 (2015), 789--821.

\bibitem{VE22} S.~K.~Vodop'yanov, N.~A.~Evseev, Functional and analytical properties of a class of mappings of quasiconformal analysis on Carnot groups, Siberian Math. J., 63 (2022), 233--261.

\bibitem{VGR79} S.~K.~Vodop'yanov, V.~M.~Gol'dshtein, Yu.~G.~Reshetnyak, On geometric properties of functions with generalized first derivatives, Uspekhi Mat. Nauk, 34 (1979), 17--65. 

\bibitem{VP24}	S.~K.~Vodop'yanov, S.~Pavlov, Functional properties of limits of Sobolev homeomorphisms with integrable distortion, Functional spaces. Differential operators. Problems of mathematics education, CMFD, 70 (2024), 215--236.

\bibitem{VU96} S.~K.~Vodop'yanov, A.~D.~Ukhlov, Approximately differentiable transformations and change of variables on nilpotent groups, Siberian Math. J., 37 (1996), 62-78.

\bibitem{VU98} S.~K.~Vodop'yanov, A.~D.~Ukhlov, Sobolev spaces and $(p,q)$-quasiconformal mappings of Carnot groups, Siberian Math. J., 39 (1998), 776--795. 

\bibitem{VU02} S.~K.~Vodop'yanov, A.~D.~Ukhlov, Superposition operators in Sobolev spaces, Russian Mathematics (Izvestiya VUZ), 46 (2002), 11--33. 

\bibitem{VU04} S.~K.~Vodop'yanov, A.~D.~Ukhlov, Set functions and their applications in the theory of Lebesgue and Sobolev spaces, Siberian Adv. in Math., 14 (2004), 78--125.

\bibitem{VU05} S.~K.~Vodop'yanov, A.~D.~Ukhlov, Set functions and their applications in the theory of Lebesgue and Sobolev spaces, Siberian Adv. in Math., 15 (2005), 91--125.

\bibitem{Z69} W.~P.~Ziemer, Change of variables for absolutely continuous functions, Duke Math. J., 36 (1969), 171--178  


\end{thebibliography}
\end{document}